\numberwithin{equation}{section}
\newtheorem{theorem}[equation]{Theorem}
\newtheorem{proposition}[equation]{Proposition}
\newtheorem{corollary}[equation]{Corollary}
\theoremstyle{definition}
\newtheorem{example}[equation]{Example}
\newtheorem{definition}[equation]{Definition}
\newcommand{\define}[1]{\textit{#1}}
\newcommand{\nbar}{\vert\!\vert}
\newcommand{\ultra}[1]{\mathsf{{#1}}}
\renewcommand{\epsilon}{\varepsilon}
\renewcommand{\emptyset}{\varnothing}
\newcommand{\ipr}[1]{\mathrm{IP}_{{#1}}}
\DeclareMathOperator*{\dlim}{UD-lim}
\DeclareMathOperator{\conv}{\ast}
\DeclareMathOperator{\fs}{FS}
\DeclareMathOperator{\fu}{FU}
\DeclareMathOperator{\lp}{L}
\DeclareMathOperator{\hj}{HJ}
\DeclareMathOperator{\ip}{IP}
\DeclareMathOperator{\mc}{MC}
\DeclareMathOperator{\amc}{AMC}
\DeclareMathOperator{\aip}{AIP}
\newcommand{\upperdens}{\mathrm{d}^*}
\DeclareMathOperator{\FS}{FS}
\title{Polynomial recurrence with large intersection over countable fields}
\author{Vitaly Bergelson}
\email{vitaly@math.ohio-state.edu}
\thanks{The first author gratefully acknowledges the support of the NSF under grant DMS-1162073.}
\author{Donald Robertson}
\email{robertson@math.ohio-state.edu}
\address{Department of Mathematics\\
  The Ohio State University\\
  231 West 18th Avenue\\
  Columbus\\
  OH 43210-1174\\
  USA}
\date{\today{}}
\begin{document}

\begin{abstract}
We give a short proof of polynomial recurrence with large intersection for additive actions of finite-dimensional vector spaces over countable fields on probability spaces, improving upon the known size and structure of the set of strong recurrence times.
\end{abstract}

\maketitle

\section{Introduction}

Let $F$ be a countable field and let $\phi \in F[x]$ have zero constant term.
Given a measure preserving action $T$ of the additive group of $F$ on a probability space $(X,\mathscr{B},\mu)$, a set $B \in \mathscr{B}$ and $\epsilon > 0$, we will show that, for any $\epsilon > 0$ the set
\begin{equation*}
\{ u \in F : \mu(B \cap T^{\phi(u)}B) \ge \mu(B)^2 - \epsilon \}
\end{equation*}
of strong recurrence times is large, in the sense of being $\ip^*_r$ up to a set of zero Banach density.
(These notions of size are defined below.)
In fact, we prove a more general result regarding strong recurrence for commuting actions of countable fields along polynomial powers.
This strengthens and extends recent results from \cite{MWcountableFields} regarding actions of fields having finite characteristic.
Here are the relevant definitions.

\begin{definition}
Let $G$ be an abelian group.
An \define{IP set} or \define{finite sums set} in $G$ is any subset of $G$ containing a set of the form
\begin{equation*}
\fs(x_1,x_2,\dots) := \bigg\{ \sum_{n \in \alpha} x_n : \emptyset \ne \alpha \subset \mathbb{N}, |\alpha| < \infty \bigg\}
\end{equation*}
for some sequence $n \mapsto x_n$ in $G$.
Given $r \in \mathbb{N}$, an \define{IP$_r$ set} in $G$ is any subset of $G$ containing a set of the form
\begin{equation*}
\fs(x_1,x_2,\dots,x_r) := \bigg\{ \sum_{n \in \alpha} x_n : \emptyset \ne \alpha \subset \{ 1,\dots,r \} \bigg\}
\end{equation*}
for some $x_1,\dots,x_r$ in $G$.
A subset of $G$ is \define{$\ip^*$} if its intersection with every $\ip$ set in $G$ is non-empty, and \define{$\ip^*_r$} if its intersection with every $\ip_r$ set is non-empty.
The term $\ip$ was introduced in \cite{MR531271}, the initials standing for ``idempotence'' or ``infinite-dimensional parallelopiped'' and $\ip^*_r$ sets were introduced in \cite{MR833409}.
The \define{upper Banach density} of a subset $S$ of $G$ is defined by
\begin{equation*}
\upperdens(S) = \sup \left\{ \upperdens_\Phi(S) : \Phi \textup{ a F\o{}lner sequence in } G \right\}
\end{equation*}
where
\begin{equation*}
\upperdens_\Phi(S) = \limsup_{N \to \infty} \frac{|S \cap \Phi_N|}{|\Phi_N|}
\end{equation*}
and a \define{F\o{}lner sequence} is a sequence $N \mapsto \Phi_N$ of finite, non-empty subsets of $G$ such that
\begin{equation*}
\lim_{N \to \infty} \frac{|(g + \Phi_N) \cap \Phi_N|}{|\Phi_N|} = 1
\end{equation*}
for all $g$ in $G$.
Lastly, $S \subset G$ is said to be \define{almost $\ip^*$} (written $\aip^*)$ if it is of the form $A \setminus B$ where $A$ is $\ip^*$ and $\upperdens(B) = 0$, and said to be \define{almost $\ip^*_r$} (written $\aip^*_r$) if it is of the form $A \backslash B$ where $A$ is $\ip^*_r$ and $\upperdens(B) = 0$.
\end{definition}

Although when $G = \mathbb{Z}$ any $\ip$ set with non-zero generators is unbounded, this is not the case in general.
For example, if $G = \mathbb{Q}$ then the $\ip$ set generated by the sequence $n \mapsto 1/n^2$ remains bounded.

To state our result we recall some definitions from \cite{MR2145566}.
Fix a countable field $F$.
By a \define{monomial} we mean a mapping $F^n \to F$ of the form $(x_1,\dots,x_n) \mapsto a x_1^{d_1} \cdots x_n^{d_n}$ for some $a \in F$ and integers $d_1,\dots,d_n \ge 0$ not all zero.
Let $V$ and $W$ be finite-dimensional vector spaces over $F$.
A mapping $F^n \to W$ is a \define{polynomial} if it is a linear combination of vectors with monomial coefficients.
A mapping $V \to W$ is a \define{polynomial} if, in terms of a basis of $V$ over $F$, it is a polynomial mapping $F^n \to W$.
Here is our main result.

\begin{theorem}
\label{thm:fieldsPolyRec}
Let $W$ be a finite-dimensional vector space over a countable field $F$ and let $T$ be an action of the additive group of $W$ on a probability space $(X,\mathscr{B},\mu)$.
For any polynomial $\phi : F^n \to W$, any $B \in \mathscr{B}$ and any $\epsilon > 0$ the set
\begin{equation}
\label{eqn:fieldLargeRec}
\{ u \in F^n : \mu(B \cap T^{\phi(u)} B) > \mu(B)^2 - \epsilon \}
\end{equation}
is $\aip^*_r$ for some $r \in \mathbb{N}$.
\end{theorem}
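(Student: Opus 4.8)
The plan is to pass to the Koopman representation and isolate the contribution of the Kronecker factor, which will supply the $\ip^*_r$ part of the conclusion, from that of the weakly mixing part, which will supply the set of upper Banach density zero that gets removed.

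First I would normalise. Writing $f = 1_B$ and $g = f - \mu(B)$, and letting $w \mapsto U^{w}$ denote the Koopman unitary representation of $W$ on $L^2(\mu)$ associated with $T$, the invariance $U^{w}1 = 1$ gives $\mu(B \cap T^{\phi(u)}B) = \langle f, U^{\phi(u)} f\rangle = \mu(B)^2 + \langle g, U^{\phi(u)} g\rangle$, so the set \eqref{eqn:fieldLargeRec} is exactly $R = \{ u : \langle g, U^{\phi(u)} g\rangle > -\epsilon \}$. A short set-theoretic observation reduces the goal to producing a single $\ip^*_r$ set $A$ and a single set $B_0$ with $\upperdens(B_0) = 0$ such that $A \setminus B_0 \subseteq R$: indeed $A' = A \cup R$ is then $\ip^*_r$, the difference $A' \setminus R = A \setminus R \subseteq A \cap B_0$ has zero upper Banach density, and $R = A' \setminus (A' \setminus R)$, exhibiting $R$ as $\aip^*_r$. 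Finally I would decompose $g = g_{\mathrm{c}} + g_{\mathrm{wm}}$ into its Kronecker (almost periodic) and weakly mixing components; since the two subspaces are $U$-invariant and orthogonal, $\langle g, U^{w} g\rangle = \langle g_{\mathrm{c}}, U^{w} g_{\mathrm{c}}\rangle + \langle g_{\mathrm{wm}}, U^{w} g_{\mathrm{wm}}\rangle$ for every $w$.

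For the Kronecker part I would approximate $g_{\mathrm{c}}$ in $L^2$ by a finite sum $\sum_{j=1}^{m} c_j e_j$ of eigenfunctions, with eigencharacters $\chi_1, \dots, \chi_m \in \widehat{W}$ and tail smaller than $\epsilon/4$, so that $\langle g_{\mathrm{c}}, U^{\phi(u)} g_{\mathrm{c}}\rangle \approx \sum_j |c_j|^2 \chi_j(\phi(u))$ and it suffices to force $\chi_j(\phi(u))$ within $\delta$ of $1$ for $j = 1, \dots, m$ simultaneously. Writing $\phi = \sum_k m_k v_k$ as a combination of monomials $m_k : F^n \to F$ with coefficients $v_k \in W$, each composite factors as $\chi_j \circ \phi = \prod_k \eta_{jk} \circ m_k$, where $\eta_{jk}(s) = \chi_j(s v_k)$ is an additive character of $F$; thus the target set is governed by finitely many additive characters of $F$ evaluated along the monomials of $\phi$. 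I would prove that $\{ u : |\chi_j(\phi(u)) - 1| < \delta \text{ for all } j \}$ is $\ip^*_r$ for a single finite $r$ by a Hales--Jewett argument: cover the compact subgroup generated by the $\chi_j$ by finitely many $\delta$-balls and, given an arbitrary $\fs(x_1, \dots, x_r)$, colour each $\alpha$ by the ball containing $(\chi_1,\dots,\chi_m)(\phi(\sum_{i \in \alpha} x_i))$; for $r$ the relevant finitistic (polynomial) Hales--Jewett number one obtains a monochromatic combinatorial configuration whose increment is a finite sum $\sum_{i \in \alpha_0} x_i$ on which the nonlinear increments of $\phi$ are controlled by the field structure, forcing the characters to return $\delta$-close to $1$. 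The finiteness of the Hales--Jewett number is precisely what produces a single $r = r(m,\delta,\deg\phi)$ and hence the $\ip^*_r$ conclusion.

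For the weakly mixing part I would show $B_0 = \{ u : |\langle g_{\mathrm{wm}}, U^{\phi(u)} g_{\mathrm{wm}}\rangle| > \epsilon/2 \}$ satisfies $\upperdens(B_0) = 0$. Since the spectral measure of $g_{\mathrm{wm}}$ on $\widehat{W}$ is continuous, a van der Corput inequality along the F\o{}lner sequences of $F^n$ combined with PET-style induction on $\deg\phi$ (where in characteristic $p$ the differencing $\phi(u+h) - \phi(u)$ is handled via the Frobenius, lowering the relevant complexity) shows these correlations have vanishing mean square along every F\o{}lner sequence, and a uniformity argument upgrades this to upper Banach density zero of the super-level set. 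Combining the two parts on $A \setminus B_0$ gives $\langle g, U^{\phi(u)} g\rangle > -\epsilon/2 - \epsilon/2 = -\epsilon$, so $A \setminus B_0 \subseteq R$ and $R$ is $\aip^*_r$. I expect the main obstacle to be the combinatorial heart of the Kronecker step, namely converting a monochromatic Hales--Jewett configuration into a genuine near-return of the \emph{nonlinear} polynomial $\phi$ with a single finite $r$, exploiting the field structure to linearise its monomials along the combinatorial subspace; ensuring that the weak-mixing estimate holds uniformly over all F\o{}lner sequences (Banach, not merely Ces\`aro, density) is the secondary technical point.
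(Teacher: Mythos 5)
Your architecture coincides with the paper's: split off the almost periodic (compact) part of $1_B$, prove an $\ip^*_r$ near-return statement for it via Hales--Jewett, and discard a set of zero upper Banach density coming from the weakly mixing part. The normalisation $g = 1_B - \mu(B)$, the orthogonal splitting of the correlation, and the set-theoretic reduction to finding one $\ip^*_r$ set $A$ and one null set $B_0$ with $A \setminus B_0 \subseteq R$ are all correct, and your weak-mixing half is exactly the content of the cited decomposition $\lp^2 = \mathscr{H}_\mathrm{c}(T) \oplus \mathscr{H}_\mathrm{wm}(T,\phi)$ from \cite{MR2145566}, which the paper imports rather than reproves; proposing to redo the PET/van der Corput argument in positive characteristic is unnecessary but not wrong.

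The genuine gap is the step you yourself flag as ``the main obstacle'': converting a monochromatic Hales--Jewett configuration into a near-return of the \emph{nonlinear} monomials of $\phi$ with a single finite $r$. You assert this can be done but supply no mechanism, and this is precisely the paper's main technical content. Two specific devices are missing. First, the combinatorial input is not the polynomial Hales--Jewett theorem but the ordinary one over the alphabet $[2^d]$ with $d = d_1 + \cdots + d_n$: encoding a word $w \in [2^d]^{[r]}$ as a $d$-tuple of subsets of $[r]$ via binary digits turns combinatorial lines into configurations $\{(\alpha_1 \cup \eta_1, \dots, \alpha_d \cup \eta_d) : \eta_i \in \{\emptyset,\gamma\}\}$ with $\gamma$ disjoint from the $\alpha_i$ (Corollary~\ref{cor:hjSets}). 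Second, given an arbitrary $\fs(u_1,\dots,u_r)$ in $F^n$, one colours each tuple $(\alpha_1,\dots,\alpha_d)$ by which $\epsilon/2^d$-ball contains $T(a\, u_{\alpha_1}(1)\cdots u_{\alpha_d}(n))x$, and the telescoping identity
\begin{equation*}
a\, u_\gamma(1)^{d_1} \cdots u_\gamma(n)^{d_n} \;=\; a \prod_{k=1}^{n} \prod_{i=e_{k-1}+1}^{e_k} \bigl( u_\gamma(k) + u_{\alpha_i}(k) - u_{\alpha_i}(k) \bigr)
\end{equation*}
expresses $\phi(u_\gamma)$ as a sum of $2^d$ increments, each joining two configuration points in the same ball, so that isometry forces $\mathsf{d}(T^{\phi(u_\gamma)}x,x) < \epsilon$ with $u_\gamma \in \fs(u_1,\dots,u_r)$. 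Without this (or an executed finitary polynomial Hales--Jewett substitute) the claim that $\{u : |\chi_j(\phi(u)) - 1| < \delta \text{ for all } j\}$ is $\ip^*_r$ remains unproved, and with it the theorem. You also tacitly need that a finite intersection of $\ip^*_{r_i}$ sets is $\ip^*_r$ for a single $r$ (Proposition~\ref{prop:ipstarFilter}), which itself requires the finitary partition result for $\ip_r$ sets proved in Section~\ref{sec:iprSets}.
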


Our result implies in particular that \eqref{eqn:fieldLargeRec} is syndetic.
In fact, as we will show in Section~\ref{sec:proof}, we have generalized \cite[Corollary~5]{MWcountableFields}, where, in the finie characteristic case, the set \eqref{eqn:fieldLargeRec} is shown to belong to every essential idempotent ultrafilter on $F$.
This latter notion of largeness, introduced in \cite{MR2353901}, lies between syndeticity and $\aip^*_r$.

The conclusion of Theorem~\ref{thm:fieldsPolyRec} is of an additive nature: the notion of being $\aip^*_r$ is only related to the additive structure of $F^n$.
It is natural to ask, when $n = 1$, whether \eqref{eqn:fieldLargeRec} is also large in terms of the multiplicative structure of $F$.
We address this question in Section~\ref{sec:multiplicative}, proving that in fact \eqref{eqn:fieldLargeRec} intersects any multiplicatively central set that has positive upper Banach density.
Multiplicatively central sets are defined in Section~\ref{sec:multiplicative} and upper Banach density is as defined above.

This result is proved in Section~\ref{sec:proof}.
In Section~\ref{sec:iprSets} we prove the facts we will need about $\ip^*_r$ sets.
Finally, in Section~\ref{sec:multiplicative} we relate the largeness of the set \eqref{eqn:fieldLargeRec} to the multiplicative structure of $F$ in the case $n = 1$.

We would like to thank R. McCutcheon for communicating to us his result used at the end of Section~\ref{sec:proof}.

\section{Finite IP sets}
\label{sec:iprSets}

Let $\mathscr{F}$ be the collection of all finite, non-empty subsets of $\mathbb{N}$.
Write $\alpha < \beta$ for elements of $\mathscr{F}$ if $\max \alpha < \min \beta$.
A subset of $\mathscr{F}$ is an $\fu$ set if it contains a sequence $\alpha_1 < \alpha_2 < \cdots$ from $\mathscr{F}$ and all finite unions of sets from the sequence.
Write $\mathscr{F}_r$ for all finite, non-empty subsets of $\{ 1,\dots, r\}$.
A subset of $\mathscr{F}_r$ (or of $\mathscr{F}$) is an $\fu_s$ set if it contains sets $\alpha_1 < \cdots < \alpha_s$ from $\mathscr{F}_r$ (or from $\mathscr{F}$) and all finite unions.
For any $\ip_r$ set $A \supset \fs(x_1,\dots,x_r)$ in an abelian group $G$ there is a map $\mathscr{F}_r \to G$ given by $\alpha \mapsto \sum \{x_i : i \in \alpha \}$, and for any $\ip$ set in $G$ there is a map $\mathscr{F} \to G$ defined similarly.

Furstenberg and Katznelson \cite{MR833409} showed that any $\ip_r^*$ set $A$ in $\mathbb{Z}$ satisfies
\begin{equation*}
\liminf_{N \to \infty} \frac{|A \cap \{1 ,\dots, N\}|}{N} \ge \frac{1}{2^{r-1}}
\end{equation*}
so for any $r \in \mathbb{N}$ one can construct an $\ip^*$ set that is not $\ip^*_r$.
The set $k\mathbb{N}$, with $k$ large enough, is one such example.
As the following example shows, by removing well-spread $\ip_r$ sets from $\mathbb{Z}$, it is possible to construct a set that is $\ip^*$ but never $\ip^*_r$.

\begin{example}
Let $A_r$ be the $\ip_r$ set with generators $x_1 = \cdots = x_r = 2^{2^r}$ so that $A_r = \{ i \cdot 2^{2^r} : 1 \le i \le r \}$.
Let $A$ be the union of all the $A_r$.
We claim that $A$ cannot contain an $\ip$ set, from which it follows that $\mathbb{N}\backslash A$ is $\ip^*$.
Since $A$ contains $\ip_r$ sets for arbitrarily large $r$ we also have that $\mathbb{N} \backslash A$ is not $\ip^*_r$ for any $r$.

Suppose that $x_n$ is a sequence generating an $\ip$ set in $A$.
If one can find $x_i \in A_r$ and $x_j \in A_s$ with $r < s$ then $x_j + x_i$ does not belong to $A$ because the gaps in $A_s$ are larger than the largest element in $A_r$.
On the other hand, if all $x_i$ belong to the same $A_r$ then some combination of them is not in $A$ because the gap between $A_r$ and $A_{r+1}$ is too large.
\end{example}

A family $\mathscr{S}$ of subsets of $G$ is said to have the \define{Ramsey property} if $S_1 \cup S_2$ belonging to $\mathscr{S}$ always implies that at least one of $S_1$ or $S_2$ contains a member of $\mathscr{S}$.
It follows from the reformulation of 
Hindman's theorem \cite{MR0349574}, stated below, that the collection of all $\ip$ subsets of a group $G$ has the Ramsey property.
A \define{coloring} of a set $A$ is any map $c : A \to \{ 1,\dots, k \}$ for some $k \in \mathbb{N}$.
Given a coloring of $A$, a subset $B$ is then called \define{monochromatic} if $c$ is constant on $B$.

\begin{theorem}[{\cite[Corollary~3.3]{MR0349574}}]
For any coloring of $\mathscr{F}$ one can find $\alpha_1 < \alpha_2 < \cdots$ in $\mathscr{F}$ such that the collection of all finite unions of the sets $\alpha_i$ is monochromatic.
\end{theorem}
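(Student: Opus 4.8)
The plan is to deduce this finite unions statement from Hindman's finite sums theorem via the idempotent-ultrafilter argument of Galvin and Glazer, and then to transfer back to finite subsets of $\mathbb{N}$ through binary supports. First I would set up the algebra of the Stone--\v{C}ech compactification: extend addition from $\mathbb{N}$ to $\beta\mathbb{N}$ by declaring, for ultrafilters $p,q$, that $A \in p+q$ precisely when $\{ n : -n + A \in q \} \in p$. With this operation $(\beta\mathbb{N},+)$ becomes a compact right-topological semigroup, so by the Ellis--Namakura lemma it contains an idempotent $p = p+p$.

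Next, given a finite coloring $c$ of $\mathbb{N}$, exactly one color class $A$ lies in $p$ since $p$ is an ultrafilter. The key lemma is that whenever $A \in p$ for an idempotent $p$, the set $A^\star = \{ n \in A : -n + A \in p \}$ again lies in $p$, and for each $n \in A^\star$ one has $-n + A^\star \in p$. I would then build a sequence recursively: having chosen $x_1,\dots,x_k$ with every finite sum $\sum_{i \in S} x_i$ lying in $A^\star$, the set $A^\star \cap \bigcap_{\emptyset \ne S} \left( -\sum_{i \in S} x_i + A^\star \right)$ is a finite intersection of members of $p$, hence lies in $p$ and in particular is nonempty; choosing $x_{k+1}$ from it keeps all new finite sums inside $A^\star \subseteq A$. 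This produces $x_1,x_2,\dots$ with $\FS(x_1,x_2,\dots) \subset A$ monochromatic.

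Finally I would transfer to $\mathscr{F}$. Identifying each positive integer with the set of exponents occurring in its binary expansion gives a bijection $\mathbb{N} \to \mathscr{F}$ under which disjoint unions correspond to carry-free sums. Transporting the given coloring of $\mathscr{F}$ to $\mathbb{N}$ and applying the above yields integers $x_1,x_2,\dots$ with monochromatic finite sums; writing $\alpha_i$ for the binary support of $x_i$, any finite union of pairwise disjoint $\alpha_i$ corresponds to the sum of the associated integers, and so the $\alpha_i$ have monochromatic finite unions.

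The one point requiring care --- and the main obstacle --- is ensuring that the supports satisfy $\alpha_1 < \alpha_2 < \cdots$, so that every finite union is automatically disjoint and the union--sum correspondence is exact. I would secure this by strengthening the recursion in the second step: at stage $k+1$ I additionally require $x_{k+1}$ to have minimal binary support exceeding the maximal support of $x_1,\dots,x_k$. This is possible because the set of integers whose support lies above any fixed threshold is a tail set belonging to the nonprincipal idempotent $p$, so intersecting it with the set from which $x_{k+1}$ is drawn leaves a member of $p$, still nonempty. With increasing supports guaranteed, the monochromatic finite sums translate precisely into monochromatic finite unions of $\alpha_1 < \alpha_2 < \cdots$, as required.
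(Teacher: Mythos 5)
The paper does not actually prove this statement---it is quoted verbatim from Hindman's 1974 paper as a known input to the subsequent partition result for $\ip_r$ sets---so any proof you give is necessarily a different route. What you supply is the standard Galvin--Glazer argument: an idempotent $p = p+p$ in $(\beta\mathbb{N},+)$ obtained from the Ellis--Namakura lemma, the lemma that $A \in p$ implies $A^\star = \{n \in A : -n + A \in p\} \in p$ with $-n + A^\star \in p$ for $n \in A^\star$, the resulting recursive construction of a monochromatic $\FS(x_1,x_2,\dots)$, and a transfer to $\mathscr{F}$ via binary supports. This is a legitimate and complete architecture, and you correctly isolate the only delicate point, namely forcing the supports $\alpha_i$ to be increasing so that finite sums are carry-free and hence correspond exactly to finite unions.

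However, the justification you give for that delicate point is wrong as stated, even though the claim itself is true. The set of positive integers whose binary support lies entirely above a threshold $M$ is $2^{M+1}\mathbb{N}$, the set of multiples of $2^{M+1}$; this is \emph{not} a tail (cofinite) set, and membership in a nonprincipal ultrafilter does not follow from nonprincipality alone---a nonprincipal ultrafilter containing the odd numbers contains no multiple of $2$. What saves you is idempotence: for every $k$ the reduction map $\mathbb{N} \to \mathbb{Z}/k\mathbb{Z}$ extends to a continuous homomorphism $\beta\mathbb{N} \to \mathbb{Z}/k\mathbb{Z}$, idempotents map to idempotents, and the only additive idempotent in $\mathbb{Z}/k\mathbb{Z}$ is $0$; hence $k\mathbb{N} \in p$ for every idempotent $p$ and every $k$, in particular $2^{M+1}\mathbb{N} \in p$. (Equivalently, $k\mathbb{N}$ is $\ip^*$ and every $\ip^*$ set belongs to every idempotent.) With that substitution your intersection at stage $k+1$ is again a finite intersection of members of $p$, hence nonempty, and the rest of the argument goes through. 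An alternative repair that avoids ultrafilters at this step is the classical one: from a monochromatic $\FS(x_n)$ extract block sums $y_1, y_2, \dots$ over disjoint index blocks, using the pigeonhole principle on partial sums modulo $2^{M+1}$ to force each new $y_{k+1}$ to be divisible by $2^{M+1}$ and hence to have support above that of its predecessors.
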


Given a family $\mathscr{I}$ of subsets of $G$, the \define{dual family} of $\mathscr{S}$ is the collection $\mathscr{S}^*$ of subsets of $G$ that intersect every member of $\mathscr{S}$ non-emptily.
Taking $\mathscr{S}$ to consist of all $\ip$ sets, one can deduce that the intersection of an $\ip^*$ set with an $\ip$ set contains an $\ip$ set and that the intersection of two $\ip^*$ sets is again $\ip^*$.
The collection of all $\ip_r$ sets does not have the Ramsey property, but there is a suitable replacement that allows one to deduce results about $\ip^*_r$ sets similar to the ones for $\ip^*$ sets mentioned above.

\begin{proposition}
For any $s$ and $k$ in $\mathbb{N}$ there is an $r$ such that any $k$-coloring of any $\ip_r$ set yields a monochromatic $\ip_s$ set.
\begin{proof}
Suppose to the contrary that one can find $s$ and $k$ in $\mathbb{N}$ such that, for any $r$ there is a $k$-coloring of an $\ip_r$ set $A_r$ having no monochromatic $\ip_s$ subset.
This coloring of $A_r$ gives rise to a coloring $c_r$ of $\mathscr{F}_r$ via the canonical map $\mathscr{F}_r \to A_r$. That no $A_r$ contains a monochromatic $\ip_s$ set implies that no $\mathscr{F}_r$ contains a monochromatic $\fu_s$ set. We now use Hindman's theorem to reach a contradiction.

Let $\alpha_i$ be an enumeration of $\mathscr{F}$. We construct a coloring $c : \mathscr{F} \to \{ 1,\dots, k \}$ by induction on $i$. To begin note that $\alpha_1 \in \mathscr{F}_r$ whenever $r > \max \alpha_1$ so we can find a strictly increasing sequence $r(1,n)$ in $\mathbb{N}$ such that $c_{r(1,n)}(\alpha_1)$ takes the same value for all $n$. Put $c(\alpha_1) = c_{r(1,n)}(\alpha_1)$. Now, assuming that we have found a strictly increasing sequence $r(i,n)$ such that, for each $1 \le j \le i$ the color $c_{r(i,n)}(\alpha_j)$ is constant in $n$ and equal to $c(\alpha_j)$, choose a strictly increasing subsequence $r(i+1,n)$ of $r(i,n)$ such that $c_{r(i+1,n)}(\alpha_{i+1})$ is constant and let this value be $c(\alpha_{i+1})$. The colors of $\alpha_1,\dots,\alpha_i$ are unchanged and the induction argument is concluded.

By Hindman's theorem we can find $\beta_1 < \cdots < \beta_s$ in $\mathscr{F}$ such that $B = \fu(\beta_1,\dots,\beta_s)$ is monochromatic, meaning $c$ is constant on $B$.
Choose $i$ such that $B \subset \{ \alpha_1,\dots, \alpha_i \}$ and then choose $n$ so large that $r(i,n) > \max \beta_s$.
It follows that $B \subset \mathscr{F}_{r(i,n)}$ is monochromatic because $c_{r(i,n)}(\beta) = c(\beta)$ for all $\beta \in B$.
Thus $\mathscr{F}_{r(i,n)}$ contains a monochromatic $\fu_s$ set, which is a contradiction.
\end{proof}
\end{proposition}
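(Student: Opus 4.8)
The plan is to argue by contradiction, using the reformulation of Hindman's theorem stated above as the sole infinitary input and converting it into the desired finite statement by a compactness-style diagonalization. First I would suppose the conclusion fails: there are $s,k \in \mathbb{N}$ so that for every $r$ one can find an $\ip_r$ set $A_r \supset \fs(x_1,\dots,x_r)$ together with a $k$-coloring of it admitting no monochromatic $\ip_s$ subset. Using the canonical map $\mathscr{F}_r \to A_r$ given by $\alpha \mapsto \sum\{x_i : i \in \alpha\}$, each such coloring pulls back to a $k$-coloring $c_r$ of $\mathscr{F}_r$, and the absence of a monochromatic $\ip_s$ subset of $A_r$ translates into the absence of a monochromatic $\fu_s$ subset of $\mathscr{F}_r$. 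This reduces everything to a purely combinatorial statement about colorings of the finite sets $\mathscr{F}_r$.

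The key step is to manufacture a single coloring $c : \mathscr{F} \to \{1,\dots,k\}$ out of the family $(c_r)$ so that Hindman's theorem becomes applicable. The crucial observation is that each $\alpha \in \mathscr{F}$ lies in $\mathscr{F}_r$ for every $r > \max \alpha$, so $c_r(\alpha)$ is defined for all large $r$ and takes only finitely many values. Enumerating $\mathscr{F}$ as $\alpha_1,\alpha_2,\dots$, I would diagonalize: passing to nested subsequences $r(i,n)$ of indices so that, at stage $i$, the color $c_{r(i,n)}(\alpha_i)$ is constant in $n$, I set $c(\alpha_i)$ equal to that common value. Because each subsequence is contained in the previous one, the values already assigned to $\alpha_1,\dots,\alpha_{i-1}$ are preserved, so $c$ is well defined on all of $\mathscr{F}$.

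With $c$ in hand, Hindman's theorem furnishes a sequence $\beta_1 < \beta_2 < \cdots$ all of whose finite unions are monochromatic; in particular it yields a monochromatic $\fu_s$ set $B = \fu(\beta_1,\dots,\beta_s)$. To close the argument I would descend back to a single finite stage: choose $i$ with $B \subset \{\alpha_1,\dots,\alpha_i\}$ and then $n$ large enough that $r(i,n) > \max \beta_s$, so that $B \subset \mathscr{F}_{r(i,n)}$ and, by construction of $c$, one has $c_{r(i,n)}(\beta) = c(\beta)$ for every $\beta \in B$. Then $B$ is a monochromatic $\fu_s$ subset of $\mathscr{F}_{r(i,n)}$, contradicting the defining property of $c_{r(i,n)}$ and hence completing the proof.

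I expect the main obstacle to be precisely the bookkeeping in the diagonalization. The monochromatic set produced by Hindman's theorem is only monochromatic for the limit coloring $c$, and a priori for no individual $c_r$; what makes the argument work is that nesting the subsequences forces $c$ to agree with some single $c_{r(i,n)}$ on any prescribed finite set, and the finiteness of $B$ then lets one pick one index witnessing monochromaticity simultaneously for all of its elements. Ensuring the subsequences are nested in the right way, so that this simultaneous agreement is available, is the delicate point that the rest of the proof depends on.
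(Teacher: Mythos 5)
Your proposal is correct and follows essentially the same route as the paper's own proof: the same contradiction setup, the same pullback of the colorings to $\mathscr{F}_r$, the same diagonal construction of a limit coloring $c$ via nested subsequences $r(i,n)$, the same appeal to Hindman's theorem, and the same descent to a single finite stage to obtain the contradiction. No substantive differences to report.
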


With this version of partition regularity for $\ip_r$ sets we can deduce some facts about $\ip_r^*$ sets.

\begin{proposition}
Given any $s \in \mathbb{N}$ there is some $r \in \mathbb{N}$ such that any $\ip_s^*$ set intersects any $\ip_r$ set in an $\ip_s$ set.
\begin{proof}
Let $A$ be an $\ip_s^*$ set and choose by the previous proposition some $r$ such that any two-coloring of an $\ip_r$ set yields a monochromatic $\ip_s$ set. Let $B$ be an $\ip_r$ set.
One of $B \cap A$ and $B \backslash A$ contains an $\ip_s$ set.
It cannot be $B \backslash A$ because $A$ is $\ip_s^*$ and disjoint from it.
Thus $A \cap B$ contains an $\ip_s$ set as desired.
\end{proof}
\end{proposition}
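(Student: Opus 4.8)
The plan is to deduce this directly from the preceding partition regularity proposition by a membership-colouring argument, exploiting the fact that an $\ip_s^*$ set meets \emph{every} $\ip_s$ set. This is the usual dual-family philosophy: a two-colouring by membership in a fixed set converts statements about $\ip_s^*$ sets into statements about monochromatic $\ip_s$ subsets, for which we already have partition regularity.

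First I would fix the given $s$ and invoke the previous proposition with $k = 2$ to produce an $r$ such that every $2$-colouring of every $\ip_r$ set admits a monochromatic $\ip_s$ subset; this is the $r$ claimed in the statement. Now let $A$ be an arbitrary $\ip_s^*$ set and let $B$ be an arbitrary $\ip_r$ set. I would colour the elements of $B$ according to whether or not they lie in $A$, that is, partition $B$ into the two classes $B \cap A$ and $B \setminus A$. By the choice of $r$, one of these two classes contains an $\ip_s$ set.

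The only remaining point — and the sole place where the hypothesis on $A$ enters — is to rule out $B \setminus A$ as the class containing an $\ip_s$ set. Indeed, any $\ip_s$ set contained in $B \setminus A$ would be disjoint from $A$, contradicting the assumption that $A$, being $\ip_s^*$, intersects every $\ip_s$ set. Hence the monochromatic $\ip_s$ subset must lie in $B \cap A$, so $A \cap B$ contains an $\ip_s$ set, as required. I do not anticipate any genuine obstacle here: all the combinatorial content is absorbed into the preceding proposition, and what remains is the routine dualization, the only subtlety being to set up the colouring so that the defining property of $\ip_s^*$ sets eliminates the complementary colour class at once.
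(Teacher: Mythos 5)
Your proposal is correct and follows exactly the same route as the paper's proof: apply the preceding partition regularity proposition with $k=2$ to obtain $r$, colour the $\ip_r$ set $B$ by membership in $A$, and use the $\ip_s^*$ property of $A$ to rule out $B \setminus A$ as the colour class containing the monochromatic $\ip_s$ set. No gaps.
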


\begin{proposition}
\label{prop:ipstarFilter}
Given any $r,s$ in $\mathbb{N}$ there is some $\alpha(r,s) \in \mathbb{N}$ such that if $A$ is $\ip_r^*$ and $B$ is $\ip^*_s$ then $A \cap B$ is $\ip^*_{\alpha(r,s)}$.
\begin{proof}
Let $A$ be $\ip^*_r$ and let $B$ be $\ip^*_s$ with $r \ge s$.
Choose $q$ so large that $A \cap C$ contains an $\ip_r$ set whenever $C$ is an $\ip_q$ set.
This is possible by the previous result.
Since $A \cap C$ contains an $\ip_r$ set and $r \ge s$ the set $(A \cap C) \cap B$ must be non-empty.
Since $C$ was arbitrary $A \cap B$ is an $\ip_q^*$ set.
Put $\alpha(r,s) = q$.
\end{proof}
\end{proposition}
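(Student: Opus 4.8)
The plan is to bootstrap from the preceding proposition, which upgrades the intersection of a dual set with a sufficiently large $\ip$ set into a genuine $\ip$ set. First I would observe that the hypotheses on $A$ and $B$ play interchangeable roles, so there is no loss in assuming $r \ge s$; this singles out $A$ as the dual set with the larger index. I would then thicken $A$: applying the preceding proposition with the index $r$ yields some $q \in \mathbb{N}$ such that the $\ip^*_r$ set $A$ meets every $\ip_q$ set in an $\ip_r$ set. I claim $\alpha(r,s) = q$ works, so it remains only to check that $A \cap B$ is $\ip^*_q$.

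To verify this, I would take an arbitrary $\ip_q$ set $C$ and produce a point of $(A \cap B) \cap C$. By the choice of $q$ there is an $\ip_r$ set $D \subseteq A \cap C$. The key observation is that, since $r \ge s$ and $\fs(x_1,\dots,x_s) \subseteq \fs(x_1,\dots,x_r)$, the set $D$ automatically contains an $\ip_s$ set. Because $B$ is $\ip^*_s$, it meets that $\ip_s$ subset, so $B \cap D \ne \emptyset$; and as $D \subseteq A \cap C$, this already furnishes a point of $(A \cap B) \cap C$. Since $C$ was arbitrary, $A \cap B$ is $\ip^*_q$, as required.

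The only genuinely delicate choice—the step I would flag as the crux—is deciding which dual set to thicken. One must apply the preceding proposition to the set with the larger dual index, namely $A$, so that the $\ip_r$ set it produces is rich enough to contain an $\ip_s$ set and thereby be detected by the remaining dual set $B$. Were one to thicken $B$ instead, the resulting $\ip_s$ set need not contain an $\ip_r$ set when $r > s$, and the argument would stall. Once this asymmetry is respected, everything else is bookkeeping.
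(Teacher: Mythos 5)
Your proof is correct and follows exactly the paper's argument: assume $r \ge s$, use the preceding proposition to choose $q$ so that $A$ meets every $\ip_q$ set in an $\ip_r$ set, and then note that this $\ip_r$ set contains an $\ip_s$ set which $B$ must meet. The remark about which dual set to thicken is a fair observation, but otherwise there is nothing to add.
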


\section{Proof of Theorem~\ref{thm:fieldsPolyRec}}
\label{sec:proof}

First we note that we may assume, by restricting our attention to the sub-$\sigma$-algebra generated by the orbit of $B$, that the probability space $(X,\mathscr{B},\mu)$ is separable.

We begin with a corollary of the Hales-Jewett theorem.
For any $n \in \mathbb{N}$ write $[n] = \{ 1,\dots,n\}$.
Write $\mathcal{P}A$ for the set of all subsets of a set $A$.
Recall that, given $k,m \in \mathbb{N}$, a \define{combinatorial line} in $[k]^{[m]}$ is specified by a partition $U_0 \cup U_1$ of $\{1,\dots,m\}$ with $U_1 \ne \emptyset$ and a function $\varphi : U_0 \to [k]$, and consists of all functions $[m] \to [k]$ that extend $\varphi$ and are constant on $U_1$.
With these definitions we can state the Hales-Jewett theorem.

\begin{theorem}[\cite{MR0143712}]
For every $d,t \in \mathbb{N}$ there is $r = \hj(d,t) \in \mathbb{N}$ such that for any $t$-coloring of $[d]^{[r]}$ one can find a monochromatic combinatorial line.
\end{theorem}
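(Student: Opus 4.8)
The plan is to treat this as the classical Hales--Jewett theorem and to prove it by induction on the size $d$ of the alphabet, allowing the number of colors $t$ to be arbitrary at each stage; thus the inductive hypothesis is that $\hj(d-1,t)$ exists for every $t \in \mathbb{N}$. The base case $d = 1$ is immediate: $[1]^{[r]}$ is a single point, and designating any non-empty set of coordinates as active produces a degenerate monochromatic line, so $r = 1$ suffices.

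For the inductive step I would use the color-focusing method that generalizes the standard proof of van der Waerden's theorem. Call a family $\ell_1,\dots,\ell_m$ of combinatorial lines in $[d]^{[r]}$ \emph{focused} at a point $z$ if, writing $p_i(1),\dots,p_i(d)$ for the points of $\ell_i$ indexed by the common value taken on the active coordinates, we have $p_i(d) = z$ for every $i$; and call the family \emph{color-focused} for a given coloring $c$ if in addition each set $\{p_i(1),\dots,p_i(d-1)\}$ is monochromatic, say of color $c_i$, with $c_1,\dots,c_m$ pairwise distinct. The key observation is a pigeonhole: once a color-focused family has size $m = t$, the focus $z$ receives one of the $t$ colors, which must coincide with some $c_i$, and then $\ell_i$ is entirely monochromatic. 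Hence it suffices to produce, in a cube of controlled dimension, a color-focused family of size $t$.

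Such a family I would build by a secondary induction on its size $m$, and the base case $m = 1$ already invokes the outer hypothesis: coloring the reduced cube $\{1,\dots,d-1\}^{[r]}$ by the restriction of $c$ and applying $\hj(d-1,t)$ yields $d-1$ points, with active coordinates taking the values $1,\dots,d-1$, all of one color; reinstating the symbol $d$ on the same active set adjoins a focus and gives a color-focused family of size one. To pass from $m$ to $m+1$ I would split the coordinates into two blocks. In the second block I would color each point of its reduced cube by the entire $c$-color pattern it induces on the first block---a coloring by finitely many colors, one per possible pattern---and apply $\hj(d-1,\cdot)$ to this enlarged palette to obtain a reduced line along which the induced pattern is constant. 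Relative to the resulting stabilized coloring of the first block I would then use the secondary hypothesis to find a color-focused family of size $m$ there, and amalgamate it with the new line from the second block, which contributes one further focused line and a fresh color, unless that color is not new, in which case a monochromatic line is already present.

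The hard part, where essentially all of the bookkeeping lives, is this merging step: one must choose the induced coloring on the second block so that the hypothesis for $d-1$ genuinely applies, and then check that the reduced line found there, once extended by the symbol $d$, shares a common focus with---and carries a color distinct from---the lines already present in the first block. Getting the quantifier order right, namely how large the second block and its palette must be relative to the first, and how the secondary induction on $m$ interleaves with the outer induction on $d$, is what forces the rapidly growing bounds and is the only genuinely delicate point; everything else is the pigeonhole above together with routine amalgamation of combinatorial lines.
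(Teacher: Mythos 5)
The paper does not actually prove this statement: it is the classical Hales--Jewett theorem, quoted from \cite{MR0143712} and used as a black box (the paper's own work begins only with Corollary~\ref{cor:hjSets}). So your sketch is not competing with an argument in the paper, and judged on its own it is a correct outline of the standard color-focusing proof: induction on the alphabet size $d$ with the number of colors $t$ left arbitrary, the pigeonhole observation that a color-focused family of $t$ lines with a common focus forces one line to be monochromatic, and a secondary induction on the size $m$ of the family carried out via a two-block product coloring, where the second block is colored by the pattern it induces on the first and $\hj(d-1,\cdot)$ is applied to that enlarged palette. Your remark that a repeated color at the new line already yields a monochromatic line (namely the old line in the first block whose color matches that of the focus) is exactly how that case closes. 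The only thing standing between your sketch and a complete proof is the bookkeeping you flag yourself: fix the first block size $n$ from the $m$-th stage of the secondary induction, take the second block of size $\hj(d-1,t^{d^{n}})$, and check that the $m$ old lines, transported along the stabilized line $\lambda$ of the second block, remain color-focused with common focus $(z,\lambda(d))$. That is routine, and the resulting tower-type bounds are irrelevant here since the paper needs only the existence of $\hj(d,t)$.
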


\begin{corollary}
\label{cor:hjSets}
For any $d,t \in \mathbb{N}$ there is $r \in \mathbb{N}$ such that any $t$-coloring
\begin{equation*}
(\mathcal{P}\{1,\dots,r\})^d \to \{1,\dots,t\}
\end{equation*}
contains a monochromatic configuration of the form
\begin{equation}
\label{eqn:hjSets}
\{ (\alpha_1 \cup \eta_1,\dots,\alpha_d \cup \eta _d) : (\eta_1,\dots,\eta_d) \in \{ \emptyset, \gamma \}^d \}
\end{equation}
for some $\gamma,\alpha_1,\dots,\alpha_d \subset \{1,\dots,r\}$ with $\gamma$ non-empty and $\gamma \cap \alpha_i = \emptyset$ for each $1 \le i \le d$.
\end{corollary}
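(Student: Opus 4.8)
The plan is to recognize the configuration \eqref{eqn:hjSets} as the image of a single Hales--Jewett combinatorial line under a natural identification of $(\mathcal{P}[r])^d$ with a space of words, and then to invoke the Hales--Jewett theorem directly. The key idea is to choose the alphabet to be $\{0,1\}^d$ rather than a plain $[d]$, so that the $2^d$ points of a combinatorial line correspond exactly to the $2^d$ tuples indexed by $\{\emptyset,\gamma\}^d$.

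Concretely, I would identify a tuple $(S_1,\dots,S_d) \in (\mathcal{P}[r])^d$ with the word $f : [r] \to \{0,1\}^d$ whose value at $j$ has $i$-th entry equal to $1$ if $j \in S_i$ and $0$ otherwise. This is a bijection between $(\mathcal{P}[r])^d$ and $(\{0,1\}^d)^{[r]}$, and after fixing any bijection $\{0,1\}^d \cong [2^d]$ the latter is just $[2^d]^{[r]}$, with combinatorial lines matching under the relabeling. Any $t$-coloring of $(\mathcal{P}[r])^d$ thus becomes a $t$-coloring of $[2^d]^{[r]}$. Applying the Hales--Jewett theorem with alphabet size $2^d$ and $t$ colors, I would take $r = \hj(2^d,t)$ and obtain a monochromatic combinatorial line, given by a partition $[r] = U_0 \cup U_1$ with $U_1 \ne \emptyset$ and a function $\varphi : U_0 \to \{0,1\}^d$.

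It then remains to read this line back as a configuration of the desired shape. I would set $\gamma = U_1$, which is non-empty, and for each $1 \le i \le d$ set $\alpha_i = \{ j \in U_0 : \varphi(j)_i = 1 \}$; since $\alpha_i \subseteq U_0$ and $U_0 \cap U_1 = \emptyset$ we get $\gamma \cap \alpha_i = \emptyset$ automatically. A point of the line is a word agreeing with $\varphi$ on $U_0$ and constant equal to some $v \in \{0,1\}^d$ on $U_1$; translating back through the identification, its $i$-th coordinate is the set $\alpha_i \cup \eta_i$, where $\eta_i = \gamma$ when $v_i = 1$ and $\eta_i = \emptyset$ when $v_i = 0$. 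As $v$ ranges over the whole alphabet $\{0,1\}^d$, the tuple $(\eta_1,\dots,\eta_d)$ ranges over all of $\{\emptyset,\gamma\}^d$, so the monochromatic line is precisely a monochromatic configuration of the form \eqref{eqn:hjSets}.

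The only genuinely substantive step is the choice of alphabet: once the $d$ independent ``on/off'' decisions of the coordinates are encoded into the single alphabet symbol $\{0,1\}^d$, the requirement that a combinatorial line be constant on the moving positions $U_1$ forces exactly the all-or-nothing adjunction of the block $\gamma = U_1$ in each of the $d$ coordinates, which is what \eqref{eqn:hjSets} demands. I expect the main (and rather minor) obstacle to be bookkeeping: keeping straight the two roles of $d$ --- as the number of coordinates here versus the alphabet-size parameter in the statement of Hales--Jewett --- and checking that combinatorial lines are preserved under the relabeling $\{0,1\}^d \cong [2^d]$. No quantitative control on $r$ beyond its mere existence is needed, so no further estimates arise.
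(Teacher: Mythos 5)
Your proof is correct and follows essentially the same route as the paper: both encode a tuple of $d$ subsets of $[r]$ as a word in $[2^d]^{[r]}$ (your indicator-vector alphabet $\{0,1\}^d$ is exactly the paper's binary-expansion identification), take $r = \hj(2^d,t)$, and read a monochromatic combinatorial line back as the configuration \eqref{eqn:hjSets} with $\gamma = U_1$. Your write-up simply spells out the translation of the line's data $(U_0, U_1, \varphi)$ into $(\gamma, \alpha_1,\dots,\alpha_d)$, which the paper leaves implicit.
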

\begin{proof}
Let $r = \hj(2^d,t)$.
Define a map $\psi : [2^d]^{[r]} \to (\mathcal{P}[r])^d$ by declaring $\psi(w) = (\alpha_1,\dots,\alpha_d)$ where $\alpha_i$ consists of those $j \in [r]$ for which the binary expansion of $w(j)-1$ has a 1 in the $i$th position.
Combinatorial lines in $[2^d]^{[r]}$ correspond via this map to configurations of the form \eqref{eqn:hjSets} in $(\mathcal{P}[r])^d$.
\end{proof}

We use the above version of the Hales-Jewett theorem to derive the following topological recurrence result.
Given $n \in \mathbb{N}$ and a ring $R$, by a \define{monomial mapping} from $R^n$ to $R$ we mean any map of the form $(x_1,\dots,x_n) \mapsto ax_1^{d_1} \cdots x_n^{d_n}$ for some $a \in R$ and some $d_1,\dots,d_n \ge 0$ not all zero.

\begin{proposition}[cf {\cite[Theorem~7.7]{MR2757532}}]
\label{lem:iprOnMetric}
Let $R$ be a commutative ring and let $T$ be an action of the additive group of $R$ on a compact metric space $(X,\mathsf{d})$ by isometries.
For any monomial mapping $\phi : R^n \to R$, any $x \in X$ and any $\epsilon > 0$ there is $r \in \mathbb{N}$ such that the set
\begin{equation*}
\{ u \in R^n : \mathsf{d}(T^{\phi(u)} x, x) < \epsilon \}
\end{equation*}
is $\ipr{r}^*$.
\end{proposition}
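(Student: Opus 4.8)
The plan is to color a finite grid using a finite cover of $X$ and invoke Corollary~\ref{cor:hjSets} to produce a monochromatic combinatorial cube whose vertices, after an inclusion--exclusion, isolate the value of $\phi$ at a single nonempty finite sum of generators. Write $\phi(u) = a\, u_1^{d_1}\cdots u_n^{d_n}$ and set $d = d_1 + \cdots + d_n$. Fix a partition of $\{1,\dots,d\}$ into blocks $J_1,\dots,J_n$ with $|J_k| = d_k$, and for a slot $j \in J_k$ write $k(j) = k$. To each tuple $(\beta_1,\dots,\beta_d) \in (\mathcal{P}\{1,\dots,r\})^d$ I would associate the ring element
\[
Q(\beta_1,\dots,\beta_d) = a\prod_{k=1}^{n}\prod_{j \in J_k}\Big(\sum_{i \in \beta_j} v_i^{(k)}\Big),
\]
where $v_1,\dots,v_r$ are the generators of a given $\ip_r$ set in $R^n$ and $v_i^{(k)}$ is the $k$th coordinate of $v_i$. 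The crucial features are that $Q$ is additive in each argument with respect to disjoint unions, and that $Q(\gamma,\dots,\gamma) = \phi\big(\sum_{i \in \gamma} v_i\big)$ for every $\gamma$.

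First I would cover $X$ by finitely many, say $t$, sets of diameter less than $\delta := \epsilon\, 2^{-d}$; since $X$ is compact this is possible. Coloring $(\beta_1,\dots,\beta_d)$ by the piece of the cover containing $T^{Q(\beta_1,\dots,\beta_d)}x$ gives a $t$-coloring, so Corollary~\ref{cor:hjSets} furnishes, provided $r$ is large enough in terms of $d$ and $t$, a nonempty $\gamma$ and sets $\alpha_1,\dots,\alpha_d$ disjoint from $\gamma$ such that all $2^d$ points $T^{Q(\alpha_1 \cup \eta_1,\dots,\alpha_d \cup \eta_d)}x$ with $(\eta_1,\dots,\eta_d) \in \{\emptyset,\gamma\}^d$ lie in one piece, hence are pairwise within $\delta$. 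Indexing these vertices by $s \in \{0,1\}^d$ (with $\eta_j = \gamma$ exactly when $s_j = 1$) and using the disjointness $\gamma \cap \alpha_j = \emptyset$, the additivity of $Q$ gives $Q(\text{vertex }s) = a\prod_j(b_j + s_j c_j)$ with $b_j = \sum_{i\in\alpha_j}v_i^{(k(j))}$ and $c_j = \sum_{i\in\gamma}v_i^{(k(j))}$.

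The heart of the argument is then a purely algebraic inclusion--exclusion. Expanding the product shows $Q(\text{vertex }s)$ depends on $s$ only through $\{j : s_j = 1\}$, as a sum of monomials $m_S$ indexed by $S \subseteq \{1,\dots,d\}$, and a standard alternating-sum identity isolates the top monomial $m_{\{1,\dots,d\}} = a\prod_j c_j = \phi\big(\sum_{i\in\gamma}v_i\big)$ as a signed sum of the $2^d - 1$ differences $Q(\text{vertex }s) - Q(\text{vertex }\mathbf{0})$ over $s \ne \mathbf 0$. Because $T$ acts by isometries, each such difference $g$ satisfies $\mathsf{d}(T^{\pm g}x,x) < \delta$, and combining at most $2^d - 1$ of these by the triangle inequality together with $T^{g+h} = T^g T^h$ yields $\mathsf{d}\big(T^{\phi(\sum_{i\in\gamma}v_i)}x, x\big) < (2^d-1)\delta < \epsilon$. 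Thus $u = \sum_{i\in\gamma}v_i$, a nonempty finite sum of the generators, lies in the target set, which therefore meets every $\ip_r$ set and is $\ip_r^*$.

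I expect the main obstacle to be arranging the encoding $Q$ so that the monochromatic cube produced by the Hales--Jewett corollary matches the $2^d$ vertices over which the inclusion--exclusion collapses to the single pure term $\phi(\sum_{i\in\gamma}v_i)$; once that bookkeeping is correct, the isometry estimate is routine. Care is also needed to confirm that the relevant signed combination lands in the additive group of $R$ and that the constant $\delta = \epsilon\, 2^{-d}$ absorbs the $2^d - 1$ accumulated errors.
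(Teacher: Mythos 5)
Your proof is correct and takes essentially the same route as the paper's: the same Hales--Jewett coloring of $(\mathcal{P}\{1,\dots,r\})^d$ via the multilinear surrogate $Q$ (the paper's $a\,u_{\alpha_1}(1)\cdots u_{\alpha_{e_n}}(n)$), followed by the inclusion--exclusion expansion of $a\prod_j c_j = a\prod_j\big((b_j+c_j)-b_j\big)$ and the isometry/triangle-inequality estimate. Your write-up merely makes explicit the telescoping bookkeeping and the $\epsilon 2^{-d}$ accounting that the paper's proof leaves implicit.
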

\begin{proof}
Write $\phi(x_1,\dots,x_n) = a x_1^{d_1} \cdots x_n^{d_n}$ for some $a \in R$ and some $d_i \ge 0$ not all zero.
Let $d = d_1 + \cdots + d_n$.
Put $e_0 = 0$ and $e_i = d_1 + \cdots + d_i$ for each $1 \le i \le n$.
Fix $x \in X$ and $\epsilon > 0$.
Let $V_1,\dots,V_t$ be a cover of $X$ by balls of radius $\epsilon/2^d$.
Let $r = r(d,t)$ be as in Corollary~\ref{cor:hjSets}.
Fix $u_1,\dots,u_r$ in $R^n$.
Given $\alpha \subset \{ 1,\dots,r \}$ write $u_\alpha$ for $\Sigma \{ u_i : i \in \alpha \}$ and $u_\alpha(i)$ for the $i$th coordinate of $u_\alpha$.
By choosing for each $(\alpha_1,\dots,\alpha_d) \in (\mathcal{P}\{ 1,\dots,r\})^d$ the minimal $1 \le i \le t$ such that
\begin{equation*}
T(a u_{\alpha_1}(1) \cdots u_{\alpha_{e_1}}(1) \cdots u_{\alpha_{e_{n-1}+1}}(n) \cdots u_{\alpha_{e_n}}(n)) x \in V_i
\end{equation*}
we obtain via Theorem~\ref{cor:hjSets} sets $\alpha_1,\dots,\alpha_d,\gamma \subset \{ 1,\dots,r \}$ with $\gamma$ non-empty and disjoint from all $\alpha_i$ which, combined with the expansion
\begin{equation*}
a u_\gamma(1)^{d_1} \cdots u_\gamma(n)^{d_n} = a \prod_{k=1}^{n} \prod_{i=e_{k-1}+1}^{e_{k}} u_\gamma(k) + u_{\alpha_i}(k) - u_{\alpha_i}(k)
\end{equation*}
and the fact that $T$ is an isometry, yields $\mathsf{d}(T^{\phi(u_\gamma)}x,x) < \epsilon$ as desired.
\end{proof}

Let $G$ be an abelian group.
Actions $T_1$ and $T_2$ of $G$ are said to \define{commute} if $T_1^g T_2^h = T_2^h T_1^g$ for all $g,h \in G$.
As we now show, iterating the previous result yields a version for commuting actions of rings.

\begin{corollary}
\label{cor:iprPolys}
Let $R$ be a commutative ring and let $T_1,\dots,T_k$ be commuting actions of the additive group of $R$ on a compact metric space $(X,\mathsf{d})$ by isometries.
For any monomial mappings $\phi_1,\dots,\phi_k : R^n \to R$, any $x \in X$ and any $\epsilon > 0$, there is $r \in \mathbb{N}$ such that
\begin{equation}
\label{eqn:metricPolyReturns}
\{ u \in R^n : \mathsf{d}(T_1^{\phi_1(u)} \cdots T_k^{\phi_k(u)} x, x) < \epsilon \}
\end{equation}
is $\ip^*_r$.
\end{corollary}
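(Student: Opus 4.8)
The plan is to reduce the multi-action statement to the single-action Proposition~\ref{lem:iprOnMetric} by a telescoping argument, and then to combine the resulting $\ip^*_r$ sets using Proposition~\ref{prop:ipstarFilter}. Abbreviate $S_j = T_j^{\phi_j(u)}$ and $P_j = S_1 \cdots S_j$, with $P_0$ the identity, so that $P_k x = T_1^{\phi_1(u)} \cdots T_k^{\phi_k(u)} x$. First I would establish, for every $u \in R^n$, the inequality
\[
\mathsf{d}(P_k x, x) \le \sum_{j=1}^{k} \mathsf{d}(S_j x, x) = \sum_{j=1}^{k} \mathsf{d}(T_j^{\phi_j(u)} x, x).
\]
This follows from the triangle inequality along the chain $P_0 x, P_1 x, \dots, P_k x$ together with the observation that each $P_{j-1}$ is an isometry, being a composition of isometries; indeed $\mathsf{d}(P_j x, P_{j-1} x) = \mathsf{d}(P_{j-1} S_j x, P_{j-1} x) = \mathsf{d}(S_j x, x)$. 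The argument uses only that each $T_j^{\phi_j(u)}$ is an isometry.

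Next I would apply Proposition~\ref{lem:iprOnMetric} separately to each action $T_j$ and monomial $\phi_j$, with tolerance $\epsilon/k$ in place of $\epsilon$: for each $1 \le j \le k$ there is $r_j \in \mathbb{N}$ such that
\[
A_j = \{ u \in R^n : \mathsf{d}(T_j^{\phi_j(u)} x, x) < \epsilon / k \}
\]
is $\ip^*_{r_j}$. By the displayed inequality, whenever $u$ lies in $\bigcap_{j=1}^k A_j$ each summand is less than $\epsilon/k$, so $\mathsf{d}(T_1^{\phi_1(u)} \cdots T_k^{\phi_k(u)} x, x) < \epsilon$; that is, $\bigcap_{j=1}^k A_j$ is contained in the set \eqref{eqn:metricPolyReturns}.

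It then remains to check that the intersection $\bigcap_{j=1}^k A_j$ is $\ip^*_r$ for a single $r$, and that this forces \eqref{eqn:metricPolyReturns} itself to be $\ip^*_r$. The first point follows by applying Proposition~\ref{prop:ipstarFilter} a total of $k-1$ times, taking $r$ to be the value obtained by iterating $\alpha(\cdot,\cdot)$ across $r_1, \dots, r_k$; this is legitimate because all the sets in question live in the additive group $R^n$. The second point is the elementary fact that the family of $\ip^*_r$ sets is upward closed: if an $\ip^*_r$ set is contained in a set $E$, then $E$ meets every $\ip_r$ set, hence $E$ is $\ip^*_r$ as well.

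The only step requiring care is the telescoping inequality: one must compare $P_j x$ with $P_{j-1} x$, so that the common left prefix $S_1 \cdots S_{j-1}$ is an isometry that cancels and leaves exactly $\mathsf{d}(S_j x, x)$. Telescoping in the other order—stripping the leftmost factor first—would instead produce a displacement of the moved point $S_2 \cdots S_k x$, which one cannot control, and would break the reduction. Everything else is routine bookkeeping: the choice of tolerance $\epsilon/k$ and tracking the growth of the $\ip^*$ index under iterated intersection.
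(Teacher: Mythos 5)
Your proposal is correct and follows essentially the same route as the paper: apply Proposition~\ref{lem:iprOnMetric} to each action with tolerance $\epsilon/k$, intersect the resulting $\ip^*_{r_j}$ sets via Proposition~\ref{prop:ipstarFilter}, and use the isometry property to telescope. The paper compresses the telescoping step into the single phrase ``since the $T_i$ are isometries,'' which your chain $P_0x, P_1x, \dots, P_kx$ spells out correctly.
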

\begin{proof}
Fix $1 \le i \le k$.
By applying Proposition~\ref{lem:iprOnMetric} to the $R$ action $r \mapsto T_i^r$, we can find $r_i \in \mathbb{N}$ such that
\begin{equation*}
Z_i = \{ u \in R^n : \mathsf{d}(T_i^{\phi_i(u)}x,x) < \epsilon/k \}
\end{equation*}
is $\ip^*_{r_i}$.
By Proposition~\ref{prop:ipstarFilter}, the intersection $Z_1 \cap \cdots \cap Z_k$ is $\ip^*_r$ for some $r \in \mathbb{N}$.
Since the $T_i$ are isometries, it follows that \eqref{eqn:metricPolyReturns} is $\ip^*_r$ as desired.
\end{proof}

Combining the preceding lemma with the following facts from \cite{MR2145566} will lead to a proof of Theorem~\ref{thm:fieldsPolyRec}.
Let $\phi : V \to W$ be a polynomial and let $T$ be an action of $W$ on a probability space $(X,\mathscr{B},\mu)$.
Assume that $\phi V$ spans $W$.
As in \cite{MR2145566}, say that $f$ in $\lp^2(X,\mathscr{B},\mu)$ is \define{weakly mixing} for $(T,\phi)$ if $\dlim_v \langle T^{\phi (v)} f, g \rangle = 0$ for all $g$ in $\lp^2(X,\mathscr{B},\mu)$, where $\dlim$ denotes convergence with respect to the filter of sets whose complements have zero upper Banach density.
This is the same as strong Ces\`{a}ro convergence along every F\o{}lner sequence in $V$.
Call $f \in \lp^2(X,\mathscr{B},\mu)$ \define{compact} for $T$ if $\{ T^{v} f : v \in V \}$ is pre-compact in the norm topology.
Denote by $\mathscr{H}_\mathrm{wm}(T,\phi)$ the closed subspace of $\lp^2(X,\mathscr{B},\mu)$ spanned by functions that are weakly mixing for $(T,\phi)$, and let $\mathscr{H}_\mathrm{c}(T)$ be the closed subspace of $\lp^2(X,\mathscr{B},\mu)$ spanned by functions compact for $T$.
We have $\lp^2(X,\mathscr{B},\mu) = \mathscr{H}_\mathrm{c}(T) \oplus \mathscr{H}_\mathrm{wm}(T,\phi)$ by \cite[Theorem~3.17]{MR2145566}.

\begin{proof}[Proof of Theorem~\ref{thm:fieldsPolyRec}]
Write $\phi = \phi_1 w_1 + \cdots + \phi_k w_k$ where the $\phi_i$ are monomials $F^n \to F$ and the $w_i$ belong to $V$.
Fix $B$ in $\mathscr{B}$ and $\epsilon > 0$.
Let $f = P1_B$ be the orthogonal projection of $1_B$ on $\mathscr{H}_\mathrm{c}(T)$.
Let $\Omega$ be the orbit closure of $f$ in the norm topology under $T$.
Since $f$ is compact, $\Omega$ is a compact metric space.
Applying Lemma~\ref{cor:iprPolys} to the $F$ actions $x \mapsto T^{xw_i}$ and monomials $\phi_i$ for $1 \le i \le k$, we see that
\begin{equation*}
\{ u \in F^n : \nbar f - T^{\phi(u)} f \nbar < \epsilon/2 \}
\end{equation*}
is $\ip^*_r$.
We have
\begin{equation*}
\langle T^{\phi(u)} 1_B, 1_B \rangle = \langle T^{\phi(u)} f, 1_B \rangle + \langle T^{\phi(u)}(1_B - f), 1_B \rangle
\end{equation*}
so the set
\begin{equation*}
\{ u \in F^n : \langle T^{\phi(u)} 1_B, 1_B \rangle \ge \langle f, 1_B \rangle - \epsilon/2 + \langle T^{\phi(u)}(1_B - f), 1_B \rangle
\end{equation*}
is $\ip^*_r$.
Since $1_B - f$ is weakly mixing for $(T,\phi)$ the set
\begin{equation*}
\{ u \in F^n : \langle T^{\phi(u)} 1_B, 1_B \rangle \ge \langle f, 1_B \rangle - \epsilon \}
\end{equation*}
is $\aip^*_r$.
Thus \eqref{eqn:fieldLargeRec} is $\aip^*_r$ by
\begin{equation*}
\langle f, 1_B \rangle = \langle P1_B, P1_B \rangle \langle 1,1\rangle \ge \langle P1_B, 1 \rangle^2 = \mu(B)^2
\end{equation*}
as desired.
\end{proof}

We obtain as a corollary the following result from \cite{MWcountableFields}.
An ultrafilter $\ultra{p}$ on an abelian group $G$ is \define{essential} if it is idempotent and $\upperdens(A) > 0$ for all $A \in \ultra{p}$.

\begin{corollary}[{\cite[Corollary~5]{MWcountableFields}}]
\label{cor:mw}
Let $F$ be a countable field of finite characteristic and let $p : F \to F^n$ be a polynomial mapping.
For any action $T$ of $F^n$ on a probability space $(X,\mathscr{B},\mu)$, any $B$ in $\mathscr{B}$ and any $\epsilon > 0$ the set
\begin{equation}
\label{eqn:mwPolyRec}
\{ x \in F : \mu(B \cap T^{p(x)}B) \ge \mu(B)^2 - \epsilon \}
\end{equation}
belongs to every essential idempotent ultrafilter.
\end{corollary}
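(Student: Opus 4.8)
The plan is to deduce the corollary directly from Theorem~\ref{thm:fieldsPolyRec}, using only the interaction between $\ip^*$ sets and idempotent ultrafilters. First I would apply the theorem with $W = F^n$ and with the mapping $p : F \to F^n$ viewed as a polynomial on the one-dimensional space $F$; this produces an $r \in \mathbb{N}$ for which the set $S$ appearing in \eqref{eqn:mwPolyRec} is $\aip^*_r$. By definition I may then write $S = A \setminus D$, where $A$ is $\ip^*_r$ and $\upperdens(D) = 0$. Since every $\ip$ set contains the $\ip_r$ set generated by its first $r$ generators, every $\ip^*_r$ set is automatically $\ip^*$; in particular $A$ is $\ip^*$.

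Next I would invoke the standard fact---a consequence of the Galvin--Glazer approach to Hindman's theorem \cite{MR0349574}, and the content of the result communicated to us by R.~McCutcheon---that every member of an idempotent ultrafilter contains an $\ip$ set. From this the key step follows: any $\ip^*$ set $A$ belongs to every idempotent ultrafilter $\ultra{p}$. Indeed, if $A \notin \ultra{p}$ then its complement lies in $\ultra{p}$ and hence contains an $\ip$ set $I$; but $A$ being $\ip^*$ forces $A \cap I \ne \emptyset$, contradicting $I \subset F \setminus A$. Applying this to $A$ gives $A \in \ultra{p}$ for every idempotent $\ultra{p}$, in particular for every essential one.

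It then remains to remove the density-zero error set, and this is where essentiality enters. Because every member of an essential ultrafilter has positive upper Banach density while $\upperdens(D) = 0$, we must have $D \notin \ultra{p}$, and therefore $F \setminus D \in \ultra{p}$. As $\ultra{p}$ is closed under finite intersections, $S = A \cap (F \setminus D) \in \ultra{p}$, which is exactly the assertion.

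The deduction is short, and I expect the only real subtlety to be conceptual rather than computational: one must combine the purely combinatorial $\ip^*$ largeness coming from Theorem~\ref{thm:fieldsPolyRec} with the density condition built into the definition of an essential ultrafilter, and it is precisely the $\aip^*_r$ decomposition that lets these two ingredients be applied separately and then recombined by the filter property. I would also note that the finite-characteristic hypothesis is carried over only to match the statement in \cite{MWcountableFields}; it is not used anywhere above, so the same argument shows that over an arbitrary countable field the set \eqref{eqn:fieldLargeRec} with $n = 1$ belongs to every essential idempotent ultrafilter.
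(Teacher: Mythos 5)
Your argument is correct and is essentially the paper's own proof: decompose the set as $A \setminus D$ with $A$ being $\ip^*_r$ (hence $\ip^*$, hence in every idempotent ultrafilter via Galvin--Glazer/Hindman) and $\upperdens(D)=0$ (hence $D$ avoids every essential ultrafilter), then intersect using the filter property. One small attribution slip: the fact that every member of an idempotent ultrafilter contains an $\ip$ set is the standard Galvin--Glazer fact and is not the result communicated by R.~McCutcheon, whose result is instead that membership in every essential idempotent does not imply $\aip^*$; your closing observation that finite characteristic is never used is accurate.
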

\begin{proof}
It follows from the proof of Theorem~\ref{thm:fieldsPolyRec} that \eqref{eqn:mwPolyRec} is of the form $A \setminus B$ where $A$ is $\ip^*_r$ for some $r \in \mathbb{N}$ and $B$ has zero upper Banach density.
Any $\ip^*_r$ subset of $G$ is $\ip^*$ and therefore belongs to every idempotent ultrafilter on $G$, so $A$ certainly belongs to every essential ultrafilter on $G$.
By the filter property, removing from $A$ a set of zero upper Banach density does not change this fact, because every set in an essential idempotent has positive upper Banach density.
\end{proof}

It was recently shown by R. McCutcheon that there are sets belonging to every essential, idempotent ultrafilter that are not $\aip^*$.
Thus our result constitues a genuine strengthening of Corollary~\ref{cor:mw}.

\section{Multiplicative structure}
\label{sec:multiplicative}

According to Theorem~\ref{thm:fieldsPolyRec} the set \eqref{eqn:fieldLargeRec} is large in terms of the additive structure of $F^n$.
In this section we connect the largeness of \eqref{eqn:fieldLargeRec} when $n = 1$ to the multiplicative structure of $F$ by showing that \eqref{eqn:fieldLargeRec} is almost an $\mc^*$ subset of $F$.
Here $\mc$ stands for \define{multiplicatively central} and a set is $\mc^*$ if its intersection with every multiplicatively central set is non-empty.

To define what a multiplicatively central set is, recall that, given a commutative ring $R$, we can extend the multiplication on $R$ to a binary operation $\conv$ on the set $\beta R$ of all ultrafilters on $R$ by
\begin{equation*}
\ultra{p} \conv \ultra{q} = \{ A \subset R : \{ u \in R : Au^{-1} \in \ultra{p} \} \in \ultra{q} \}
\end{equation*}
for all $\ultra{p},\ultra{q} \in \beta R$.
One can check that this makes $\beta R$ a semigroup.
It is also possible to equip $\beta R$ with a compact, Hausdorff topology with respect to which the binary operation is right continuous.
See \cite{MR2052273} or \cite{MR2893605} for the details of these constructions.
A subset $A$ of $R$ is then called \define{multiplicatively central} or \define{MC} if it belongs to an ultrafilter that is both idempotent and contained in a minimal right ideal of $\beta R$.
The following version of \cite[Theorem~3.5]{MR1305896} relates $\ip_r$ sets in $R$ to multiplicatively central sets.

\begin{proposition}
Let $R$ be a commutative ring and let $A \subset R$ be a multiplicatively central set.
For every $r \in \mathbb{N}$ one can find $x_1,\dots,x_r$ in $R$ such that $\FS(x_1,\dots,x_r) \subset A$.
\end{proposition}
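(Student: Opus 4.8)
The plan is to play the two ring operations off against each other: the hypothesis is purely multiplicative while the desired $\FS(x_1,\dots,x_r)\subseteq A$ is additive, so the distributive law must mediate between them. First I would unwind the hypothesis: $A$ multiplicatively central means $A\in\ultra{p}$ for some $\ultra{p}\in\beta R$ that is idempotent for $\conv$ and lies in a minimal right ideal of $(\beta R,\conv)$; in particular $\ultra{p}$ is a minimal idempotent and $A$ is a central subset of the commutative semigroup $(R,\conv)$. The one reduction I would make at the outset is that it suffices to find, for each $N\in\mathbb{N}$, a single $d\in R$ with $\{\,jd:1\le j\le N\,\}\subseteq A$, where $jd=d+\cdots+d$ denotes the $j$-fold additive multiple. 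Granting this, put $N=2^r-1$ and $x_i=2^{i-1}d$; then for nonempty $\alpha\subseteq\{1,\dots,r\}$ one has $\sum_{i\in\alpha}x_i=j_\alpha d$ with $j_\alpha=\sum_{i\in\alpha}2^{i-1}$, and as $\alpha$ runs over the nonempty subsets $j_\alpha$ runs over $\{1,\dots,2^r-1\}$ bijectively, so $\FS(x_1,\dots,x_r)=\{\,jd:1\le j\le 2^r-1\,\}\subseteq A$. Thus the proposition reduces to showing that a multiplicatively central set contains a homogeneous arithmetic progression $\{d,2d,\dots,Nd\}$ of every prescribed length.

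For the reduced statement the decisive point is that $jd=(j\cdot 1_R)\,d$ has a foot in each operation: the progression $\{d,2d,\dots,Nd\}$ is the multiplicative translate by $d$ of the fixed finite set $\{1_R,2\cdot1_R,\dots,N\cdot 1_R\}$, while the relations that make it an arithmetic progression are additive. Here I would invoke the Central Sets Theorem for $(R,\conv)$: since $\ultra{p}$ is a minimal idempotent, $A$ is central, and the theorem produces inside $A$ a rich finite-products configuration generated from an auxiliary sequence that we are free to prescribe. Choosing that sequence to encode the multiples $1_R,2\cdot 1_R,\dots$ and then expanding the guaranteed products via the distributive law is what should convert the multiplicative output into the additive progression $\{\,jd\,\}$; this is the ring-theoretic transcription of \cite[Theorem~3.5]{MR1305896}. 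Some care is needed when $R$ has positive characteristic, where several of the multiples $j\cdot 1_R$ collide and $N\cdot 1_R$ may vanish, so the progression is correspondingly shorter and the argument must be kept inside the multiplicative structure of $R\setminus\{0\}$; this is exactly the setting $R=F$ a field in which the proposition will be applied.

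The main obstacle is precisely this additive/multiplicative interplay, and it is worth saying why neither operation alone closes the gap. Membership of $A$ in a multiplicative idempotent already yields, by the Galvin--Glazer argument, a multiplicative finite-products set inside $A$; but such a set carries no additive data and cannot locate even a single pair $\{d,2d\}$. Conversely no additive idempotent is available, so additive Hindman- or van der Waerden-type input cannot be used off the shelf. What breaks the deadlock is minimality---centrality rather than bare idempotency---which is exactly the hypothesis that powers the Central Sets Theorem, together with the distributive law, which transports the structure the theorem produces from the multiplicative to the additive side. I would therefore present the proof in three moves: (i) the reduction above to homogeneous progressions; (ii) the Central Sets Theorem together with distributivity to obtain $\{\,jd:1\le j\le N\,\}\subseteq A$; and (iii) the assembly of $x_1,\dots,x_r$ by the binary device. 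Since the Central Sets Theorem delivers only finite configurations, the argument yields exactly the finite sets $\FS(x_1,\dots,x_r)$ claimed and makes no attempt at an infinite additive $\ip$ set, consistent with the quantifier ``for every $r$'' in the statement.
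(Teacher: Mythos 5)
Your guiding philosophy is sound --- distributivity is indeed what transports dilates of additive configurations back into additive configurations, and minimality rather than bare idempotency is what powers the argument --- but both pivotal steps of the proposal have genuine gaps. The first is the reduction to homogeneous progressions $\{jd : 1\le j\le N\}$ via $x_i=2^{i-1}d$: this collapses in positive characteristic, which is precisely the case this paper is most concerned with (it generalizes results for fields of finite characteristic). In characteristic $p$ the set $\{j\cdot 1_R : 1\le j\le N\}$ has at most $p$ elements and contains $0$ as soon as $N\ge p$, so your reduced statement forces $0\in A$ (false for a typical multiplicatively central set), and the generators $x_i=2^{i-1}d$ degenerate --- in characteristic $2$ all but the first vanish. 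Your parenthetical ``the progression is correspondingly shorter'' concedes a fatal point rather than a cosmetic one: a one-dimensional configuration $\{d,2d,\dots\}$ cannot encode a nondegenerate $\FS(x_1,\dots,x_r)$ once $r\ge p$. The paper's proof sidesteps this entirely by producing genuinely $r$-dimensional configurations: it shows that the family $T$ of ultrafilters all of whose members contain $\ip_r$ sets for every $r$ is a two-sided ideal of $(\beta R,\conv)$ --- the identities $\FS(x_1,\dots,x_r)\,u=\FS(x_1u,\dots,x_ru)$ and $a\,\FS(x_1,\dots,x_r)=\FS(ax_1,\dots,ax_r)$ are where distributivity enters --- and then observes that any minimal right ideal must be contained in $T$.

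The second gap is that the step which is supposed to deliver $\{jd:1\le j\le N\}\subseteq A$ is outsourced to ``the Central Sets Theorem for $(R,\conv)$ together with distributivity,'' but the Central Sets Theorem for the single semigroup $(R,\cdot)$ outputs finite \emph{products} of the form $a\prod_{t\in H}y_{\ell,t}$, which for constant auxiliary sequences $y_{\ell,n}=\ell\cdot 1_R$ gives $a\cdot(\ell\cdot 1_R)^{|H|}$ rather than the simple dilates $a\cdot(\ell\cdot 1_R)$ you need; nothing in the proposal explains how to convert this multiplicative output into the additive progression. That conversion is the entire content of the proposition, and in both this paper and in the cited Bergelson--Hindman result it is accomplished by the two-sided-ideal computation described above, not by the Central Sets Theorem. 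As written, the proposal therefore restates the central difficulty instead of resolving it.
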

\begin{proof}
Consider the family $T$ of ultrafilters $\ultra{p}$ on $R$ having the property that every set in $\ultra{p}$ contains an $\ip_r$ set for every $r \in \mathbb{N}$.
We claim that $T$ is a two-sided ideal in $\beta R$.
Indeed fix $\ultra{p} \in T$ and $\ultra{q} \in \beta R$.
We need to prove that $\ultra{p} \conv \ultra{q}$ and $\ultra{q} \conv \ultra{p}$ belong to $T$.

For the former, fix $B \in \ultra{p} \conv \ultra{q}$ and $r \in \mathbb{N}$.
We can find $u \in R$ such that $Bu^{-1} \in \ultra{p}$ so $Bu^{-1}$ contains $\FS(x_1,\dots,x_r)$ for some $x_1,\dots,x_r$ in $R$.
This immediately implies that $\FS(x_1u,\dots,x_ru) \subset B$ as desired.

For the latter, fix $B \in \ultra{q} \conv \ultra{p}$ and $r \in \mathbb{N}$.
We can find $x_1,\dots,x_r$ in $R$ such that $\FS(x_1,\dots,x_r) \subset \{ u \in R : Bu^{-1} \in \ultra{q} \}$.
But by the filter property
\begin{equation}
\label{eqn:finiteIntersection}
\cap \{ Bu^{-1} : u \in \FS(x_1,\dots,x_r) \} \in \ultra{q}
\end{equation}
and choosing $a$ from this intersection gives $\FS(ax_1,\dots,ax_r) \subset B$.

Our set $A$ is multiplicatively central so it is contained in some idempotent ultrafilter $\ultra{p}$ that belongs to a minimal right ideal $S$.
Since $T$ is also a right ideal $S \subset T$ and $\ultra{p} \in T$ as desired.
\end{proof}

Note that it is not possible to prove this way that multiplicatively central sets contain $\ip$ sets, as that would require an infinite intersection in \eqref{eqn:finiteIntersection}.
In fact, as shown in \cite[Theorem~3.6]{MR1305896}, there are multiplicatively central sets in $\mathbb{N}$ that do not contain $\ip$ sets.

We say that a subset of $R$ if $\mc^*$ if its intersection with every multiplicatively central set is non-empty.
As noted in \cite{MR2757532}, the preceding result implies that every $\ip^*_r$ set is $\mc^*$.
Call a set $\amc^*$ (with A again standing for ``almost'') if it is of the form $A \setminus B$ where $A$ is $\mc^*$ and $B$ has zero upper Banach density in $(R,+)$.
The following result is then an immediate consequence of Theorem~\ref{thm:fieldsPolyRec}.

\begin{theorem}
Let $F$ be a countable field and let $T$ be an action of the additive group of $F$ on a probability space $(X,\mathscr{B},\mu)$.
For any polynomial $\phi \in F[x]$, any $B \in \mathscr{B}$ and any $\epsilon > 0$ the set
\begin{equation}
\{ u \in F : \mu(B \cap T^{\phi(u)} B) > \mu(B)^2 - \epsilon \}
\end{equation}
is $\amc^*$.
\end{theorem}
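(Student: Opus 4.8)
The plan is to deduce this theorem directly from Theorem~\ref{thm:fieldsPolyRec} by bootstrapping through the two comparison results already established in this section. The key observation is that Theorem~\ref{thm:fieldsPolyRec}, applied in the special case $n = 1$ with $W = F$ regarded as a one-dimensional vector space over itself, tells us that the set
\begin{equation*}
S = \{ u \in F : \mu(B \cap T^{\phi(u)} B) > \mu(B)^2 - \epsilon \}
\end{equation*}
is $\aip^*_r$ for some $r \in \mathbb{N}$. By definition this means $S = A \setminus B'$ where $A$ is $\ip^*_r$ and $\upperdens(B') = 0$, the density being taken in the additive group $(F,+)$.

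Next I would invoke the chain of implications that this section has assembled to convert ``$\ip^*_r$'' into ``$\mc^*$''. The Proposition proved above shows that every multiplicatively central set contains an $\fs(x_1,\dots,x_r)$ for every $r$, hence contains an $\ip_r$ set for every $r$. Consequently any $\ip^*_r$ set, having non-empty intersection with every $\ip_r$ set, must intersect every multiplicatively central set, so $A$ is $\mc^*$. This is exactly the implication noted in the text as following from \cite{MR2757532}. With $A$ now identified as an $\mc^*$ set and $B'$ still of zero upper Banach density in $(F,+)$, the decomposition $S = A \setminus B'$ is precisely the form required in the definition of $\amc^*$, and the theorem follows.

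The only point requiring a moment of care is matching the density conventions: the notion $\aip^*_r$ uses zero upper Banach density in the additive group $F^n$, and with $n=1$ this is zero upper Banach density in $(F,+)$, which is exactly the density used in the definition of $\amc^*$. So no reconciliation between additive and multiplicative densities is needed here, and indeed the whole argument is purely formal. I do not anticipate a genuine obstacle: the substance has already been discharged in proving Theorem~\ref{thm:fieldsPolyRec} and the structural Proposition relating $\ip_r$ sets to multiplicatively central sets, and the present theorem is essentially a repackaging of those two facts.
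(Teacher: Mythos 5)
Your proposal is correct and follows exactly the route the paper intends: the paper states this theorem as an immediate consequence of Theorem~\ref{thm:fieldsPolyRec} together with the observation (via the Proposition on multiplicatively central sets containing $\ip_r$ sets) that every $\ip^*_r$ set is $\mc^*$, and your write-up simply makes that chain explicit, including the correct check that both density notions are taken in $(F,+)$.
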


We conclude by mentioning that all $\amc^*$ sets have positive upper Banach density in $(F,+)$.
This follows from the fact that every $\mc^*$ set belongs to every minimal multiplicative idempotent, and a straightforward generalization of \cite[Theorem~5.6]{MR982232}, which guarantees the existence of a minimal idempotent for $\conv$ all of whose members have positive upper Banach density in $(F,+)$.

\printbibliography

\end{document}